\def\aa#1{ \begin{align*} #1 \end{align*} }
\def\aaa#1{ \begin{align} #1 \end{align} }
\def\mm#1{ \begin{multline*} #1 \end{multline*} }
\def\mmm#1{ \begin{multline} #1 \end{multline} }
\newtheorem{thm}{\sc Theorem}[section]
\newtheorem{lem}{\sc Lemma}[section]
\newcommand{\eps}{\varepsilon}
\newcommand{\pl}{\partial}
\newcommand{\lt}{\leqslant}
\newcommand{\sub}{\subset}
\newcommand{\al}{\alpha}
 \newcommand{\Gm}{\Gamma}
 \newcommand{\Dl}{\Delta}
 \newcommand{\La}{\Lambda}
\newcommand{\mc}{\mathcal}
\newcommand{\C}{{\rm C}}
\newcommand{\E}{\mathbb E}
\newcommand{\x}{\times}
\newcommand{\mto}{\mapsto}
\newcommand{\rf}{\eqref}
\newcommand{\bi}{\begin{itemize}}
\newcommand{\ei}{\end{itemize}}
\newcommand{\ovl}{\overline}
\DeclareMathOperator{\ind}{\mathbbm{1}}
\newcommand{\lap}{\Delta}
\newcommand{\nab}{\nabla}
\newcommand{\lb}{\label}
\newcommand{\fdot}{\,\cdot\,}
\def\Rnu{{\mathbb R}}
\def\Nnu{{\mathbb N}}
\def\Bl{{\mathbb B}}
\def\Tor{{\mathbb B}}
\def\ffi{\varphi}
\def\com#1{}
\def\begeq{\begin{equation} \begin{cases}} 
\def\endeq{ \end{cases} \end{equation}}
\def\eq#1{ \begeq #1 \endeq }
\def\bege{\begin{equation*} \begin{cases}} 
\def\ende{ \end{cases} \end{equation*}}
\long\def\symbolfootnote[#1]#2{\begingroup%
\def\thefootnote{\fnsymbol{footnote}}\footnote[#1]{#2}\endgroup}
\titleformat{\section}[hang]{\large\bfseries}{\thesection.}{1ex}{}{}
\titleformat{\subsection}[hang]{\normalsize\bfseries}{\thesubsection}{2ex}{}{}
\titleformat{\subsubsection}[hang]{\small\bfseries}{\thesubsubsection}{2ex}{}{}
\title[Burgers equation in the adhesion model]{Burgers equation in the adhesion model}
\author{Yuri Gliklikh}
\address{Voronezh State University, Universitetskaya pl. 1, Voronezh, Russia   }
\email{yeg@math.vsu.ru}
\author{Evelina Shamarova}
\address{Mathematics department, Federal University of Para\'iba, Jo\~ao Pessoa, Brazil}
\email{evelina@mat.ufpb.br}
\begin{document}

\maketitle

\vspace{-7mm}

\begin{abstract}
We prove the existence and uniqueness of a classical solution to
a  multidimensional non-potential  stochastic Burgers equation with H\"older continuous
initial data. Our motivation is the adhesion model in the theory of formation of the large-scale
structure of the universe. Importantly, we drop the assumption on the potentiality of the velocity flow that has been questioned
in physics literature.
\end{abstract}

\vspace{3mm}

{\footnotesize
{\bf Keywords:} Multidimensional non-potential stochastic Burgers equation, adhesion model

\vspace{3mm}

{\bf AMS subject classifications:} 60H15, 60H10, 35K59}

\section{Introduction}
In this work, we are concerned with
a multidimensional  stochastic Burgers equation of the form
\aaa{
\label{burgers}
y(t,x) =  \ffi(x) + \int_0^t \big[\nu \Dl y(s,x) - (y,\pl_x)y(s,x)  \big] ds +  \eta(t,x)
 }
 with a view of its application to the adhesion model in cosmology. Here,
$(t,x) \in [0,T] \x \Rnu^d$, $\ffi$ is a H\"older continuous bounded initial function,
and $\eta(t,x)$ is a noise, rough in time and smooth in space.
The presence of the noise, however, is introduced for the sake of generality since 
in our main application
$\eta = 0$.

The adhesion model was introduced by Gurbatov and Saichev \cite{gurvatov-saichev1984} 
as a generalization of Zel'dovich's approximation \cite{zeldovich} aiming to represent the effect of gravitational 
sticking in the formation of the large-scale structure of the universe. 
Zel'dovich's approximation is based
on an inviscid potential Burgers equation, and is only valid in the early linear regime of the universe expansion.
It is essential that  in this part of the theory, the potentiality of the velocity field is used in the derivation of the basic 
equation of the model.
To formulate the adhesion model, the term $\nu \lap y$ was artificially
added to the (inviscid) Burgers equation, and then the model was tested numerically showing a qualitative and  a quantitative
agreement with a gravitational $N$-body  simulation \cite{coles} (a numerical solution of the equations of motion 
for $N$ gravitationally interacting particles).
It turned out that this agreement holds not only in the early linear  regime but also in the subsequent 
strongly non-linear regime,
 described as follows: the matter, previously concentrated in  Zel'dovich's pancakes, moves towards the faces, 
the edges, and the vertices
of the emerging mosaic structure, and then the latter is deformed due to the gravitational interaction \cite{gurbatov1}.
However, according to what was discussed in \cite{buchert92}, edges and vertices, representing 
high-density regions of the  structure (galaxy clusters and superclusters),
are associated with strong vortex flows. This means that the velocity flow is no longer potential.
Remark that the assumption on its potentiality
in the derivation of Zel'dovich's approximation is only valid in the early linear regime while in the adhesion model
the initial condition is still assumed potential.  This latter fact is our main motivation for studying
a non-potential Burgers equation which agrees with the adhesion model otherwise.  By the latter, we mean
that the initial function should be stationary against translations
and scale-invariant \cite{devilstaircase}. The above properties are satisfied by a 
fractional Brownian sheet $W^H(x)$.  Since we require the initial data to be bounded, we consider
$\ffi(x) = \zeta(x) W^H(x)$, where $\zeta(x)$ is a $\C^\infty$-cutting
function, i.e., a mollified indicator function, of a bounded domain
where the expansion of the universe takes place according to the adhesion model, and
whose diameter is compatible with the scale of validity of the latter \cite{gurbatov1}.




Thus, in this work, we are concerned with the existence and uniqueness of a classical solution to equation \rf{burgers} 
 when the initial function $\ffi(x)$ possesses just a H\"older regularity. According to the results of
\cite{meersch} (Theorem 6.2), the aforementioned property is satisfied by a fractional Brownian sheet.
Thus, the above-described choice of the initial data  is suitable for the adhesion model.

 A stochastic Burgers equation of form \rf{burgers} has been extensively studied 
in the literature over the past two decades  (see, e.g.,
 \cite{boritchev,  bec, bertini,  engle, frish, gotoh, gurbatov, gurbatov1, iturriaga}); however, in most cases this
 study was restricted to the one-dimensional or the potential case.  The interest to the potential case
 is mainly based on the existence of an exact solution by means of the Cole-Hopf 
 transformation.  In the non-potential case, it is only known   that a multidimensional stochastic
 Burgers equation possesses a solution in an $L_p$-space \cite{brzezniak}. However, an 
 $L_p$-solution is not a classical, and not even a continuous solution, 
 and, therefore, is not suitable for the adhesion model. Indeed,  the evolution of the large-scale structure of the universe
 is regarded as a continuous process of transport of the matter. 
 Thus, apart of our main important application, 
a classical solution to equation \rf{burgers} in the non-potential case represents an interesting mathematical question 
(see also the discussion in \cite{bec} on this topic).

Our pathway to obtaining a classical solution to \rf{burgers} is as follows. 
Equation \rf{burgers} is first reduced to a PDE with random coefficients. 
The sequence of classical solutions $y_m$ corresponding to a sequence $\ffi_m$ of mollified initial functions 
is shown to be bounded and H\"older continuous uniformly in $m$ which implies the existence of 
a converging subsequence. The limit function $y^{(1)}$ is then plugged into the non-linear term in \rf{burgers},
i.e., the non-linear term becomes $(y^{(1)}, \pl_x) y$, reducing \rf{burgers} to a linear equation.
For a linear PDE, in turn, it is well known (see, e.g., \cite{friedman}) that a H\"older regularity of the initial data
is sufficient for the existence of a classical solution.

 \section{Preliminaries}

\subsection{Existence of solution to a deterministic Burgers-type equation}

In this section,  we obtain the existence of solution to the Burgers-type equation
\aaa{
\lb{generalized-burgers}
\begin{cases}
\pl_t y(t,x) =  \nu \lap y(t,x) - (g(t,x,y),\pl_x)y(t,x) + f(t,x,y),\\
 y(0,x) = \ffi(x),
\end{cases}
} 
where the force $f$ is smooth in time and space variables and $\ffi$ is of 
class $\C^{2+\beta}_b(\Rnu^d)$, $\beta\in (0,1)$. By the latter, we understand the class of twice
differentiable functions possessing a H\"older continuous second derivative.
This result will be required in the next section.
 
Recall that the H\"older constants $[\Phi]^t_{\frac{\beta}2}$ and  $[\Phi]^x_{\beta}$  are defined as follows:
\aa{
[\Phi]^t_{\frac{\beta}2} = \sup_{\substack{t,t'\in [0,T],\\ t\ne t'}} \frac{|\Phi(t,x,y) - \Phi(t',x,y)|}{|t-t'|^\frac{\beta}{2}}; \; \;
[\Phi]^x_{\beta} =  \sup_{\substack{x,x'\in\Rnu^d,\\ 0<|x-x'|<1}}  \frac{|\Phi(t,x,y) - \Phi(t,x',y)|}{|x-x'|^\beta};
}
$[\Phi]^y_\beta$  is defined likewise. 

Theorem \ref{lem11} below  (the existence theorem for equation \rf{generalized-burgers}) and its proof
are included here for the sake of
completeness since in \cite{lady} both are given only for the case one equation.
\begin{thm}
\lb{lem11}
Let the functions $f$, $g$, and $\ffi$ satisfy assumptions (i) -- (iii) below
\bi
\item[(i)]  $\ffi:\, \Rnu^d\to \Rnu^d$
  belongs to $\C^{2+\beta}_b(\Rnu^d)$, $\beta\in (0,1)$;
\item[(ii)] the partial derivatives $\pl_t f$, $\pl_x f$, $\pl_y f$, $\pl_t g$, $\pl_x g$, $\pl_y g$ are continuous;
\item[(iii)] the following estimates hold  on $[0,T] \x \Rnu^d\x\Rnu^d$:
\aa{
&(f(t,x,y), y) < \mc C(1+|y|^2), \\
& |g(t,x,y)| + |\pl_x g(t,x,y)| + |\pl_y g(t,x,y)|  + [g]^t_{\frac{\beta}2} \lt \La_1(|y|),\\
& |f(t,x,y)| + |\pl_x f(t,x,y)| + |\pl_y f(t,x,y)|  + [f]^t_{\frac{\beta}2} \lt \La_2(|y|),
 }
where  $\mc C$ is a constant and 
 $\La_1(\fdot)$ and $\La_2(\fdot)$ are positive
non-decreasing functions. 
\ei
Then, there exists a $\C^{1,2}_b$-solution $y(t,x)$ to problem \rf{rforced}.
Further, the bound for $|y(t,x)|$ depends only on $\mc C$, $T$, and $\sup_{x\in \Rnu^d}|\ffi(x)|$.

\end{thm}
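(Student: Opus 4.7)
The plan is to combine a uniform a priori estimate with the Leray--Schauder fixed point principle, following the scalar quasilinear template of \cite{lady} adapted to the vector-valued case. Fix $\beta' \in (0,\beta)$ and, for each $\tilde y$ in a ball of $C^{\beta'/2,\beta'}_b([0,T]\times\Rnu^d;\Rnu^d)$ and each $\sigma\in[0,1]$, define $\Phi(\tilde y,\sigma)=y$ as the unique classical solution of the linear parabolic system obtained by freezing $\tilde y$ in the lower-order terms:
\[
\pl_t y = \nu \Delta y - (g(t,x,\tilde y),\pl_x) y + \sigma f(t,x,\tilde y), \qquad y(0,\fdot)=\sigma\ffi.
\]
Since the principal part $\nu\Delta$ is diagonal and the frozen coefficients inherit H\"older regularity in $(t,x)$ from assumption (iii) and the H\"older continuity of $\tilde y$, classical linear Schauder theory for parabolic systems (see \cite{friedman}) yields $y\in C^{1+\beta'/2,2+\beta'}_b$; in particular $\Phi(\fdot,\sigma)$ is compact on $C^{\beta'/2,\beta'}_b$. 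Existence for \rf{generalized-burgers} reduces to showing that every fixed point of $\Phi(\fdot,\sigma)$ admits a bound in $C^{\beta'/2,\beta'}_b$ uniform in $\sigma$.

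The first and crucial step is an $L^\infty$ estimate exploiting the dissipative structure in hypothesis (iii). Setting $u=\tfrac12|y|^2$ and using the identity $((g,\pl_x)y,y) = (g,\pl_x)u$ together with $y\cdot\Delta y \lt \Delta u$, one obtains the scalar differential inequality
\[
\pl_t u \lt \nu\Delta u - (g,\pl_x) u + \sigma\,(f(t,x,y),y) \lt \nu\Delta u - (g,\pl_x) u + \mc C(1+2u).
\]
The scalar parabolic maximum principle, applied after the exponential substitution $v=e^{-2\mc Ct}u$ and together with a truncation device to handle the unboundedness of $\Rnu^d$, yields $\sup_{t,x}|y(t,x)| \lt M$, where $M$ depends only on $\mc C$, $T$ and $\sup_x|\ffi(x)|$ as claimed in the statement.

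Once the $L^\infty$ bound is secured, $g(t,x,y)$ and $f(t,x,y)$ are bounded by $\La_1(M)$ and $\La_2(M)$ respectively, and the equation for each component of $y$ is a scalar linear parabolic equation with bounded drift, bounded source and the Laplacian as leading term. A standard interior H\"older regularity theorem for such equations then produces a uniform bound on $y$ in $C^{\beta'/2,\beta'}_b$, closing the Leray--Schauder loop. Feeding the resulting fixed point back into the frozen linear Schauder problem, and combining hypothesis (i) on $\ffi$ with the H\"older regularity of $g(\fdot,\fdot,y)$ and $f(\fdot,\fdot,y)$ in $(t,x)$ (which now follows from the H\"older continuity of $y$ and the Lipschitz dependence of $g,f$ on $y$ granted by (ii)--(iii)), one final Schauder estimate upgrades $y$ to $C^{1+\beta/2,2+\beta}_b$, in particular to $C^{1,2}_b$.

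The expected main obstacle is the $L^\infty$ step in the vector-valued setting, since componentwise maximum principles fail for coupled parabolic systems. Assumption (iii) is formulated via the inner product $(f,y)$ for exactly this reason: together with the diagonal structure of the leading Laplacian, it allows the whole argument to be transported to the scalar quantity $|y|^2$, reducing the vector problem to the well-understood scalar theory and giving the claimed dependence of $M$ on the data.
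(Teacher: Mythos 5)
Your overall architecture (Leray--Schauder with frozen coefficients, an $L^\infty$ bound obtained by applying the scalar maximum principle to $u=\tfrac12|y|^2$ via the identities $(y,\lap y)\lt \lap u$ and $((g,\pl_x)y,y)=(g,\pl_x)u$ together with $(f,y)<\mc C(1+|y|^2)$, then a H\"older estimate and a final Schauder upgrade) is the right circle of ideas, and you correctly identify why hypothesis (iii) is phrased through the inner product. But there is one genuine gap: the map $\Phi(\fdot,\sigma)$ is \emph{not} compact on $\C^{\beta'/2,\beta'}_b([0,T]\x\Rnu^d)$. Bounded subsets of $\C^{1+\beta'/2,2+\beta'}_b([0,T]\x\Rnu^d)$ are not relatively compact in $\C^{\beta'/2,\beta'}_b([0,T]\x\Rnu^d)$ because the spatial domain is unbounded: the translates of a fixed bump function are bounded in every H\"older norm yet admit no subsequence converging uniformly on $\Rnu^d$. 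Arzel\`a--Ascoli gives compactness only on compact sets, so the hypothesis of the Leray--Schauder theorem fails and your fixed-point loop does not close as written.

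The paper circumvents precisely this obstruction. It first poses an initial--boundary value problem on a ball $B_r$, with the data cut off by a smooth $\zeta$ and a zero boundary condition; there the Leray--Schauder scheme (essentially your argument) is valid and is quoted from Theorem 7.1 of Chapter VII of \cite{lady}. The key additional input is that the resulting bounds --- the $L^\infty$ bound through $\mc C$, $T$, $\sup|\ffi|$, the gradient bound (Theorem 6.1 of \cite{lady}), and the interior $\C^{1+\beta/2,2+\beta}$ bound (Theorem 5.1) --- are all independent of the radius $r$, after which a solution on $[0,T]\x\Rnu^d$ is extracted by a diagonal Arzel\`a--Ascoli argument over an exhausting sequence of balls. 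So the repair is to run your fixed-point argument on expanding balls and append this limiting step; this is exactly where the uniformity in $r$ of the estimates you establish is actually consumed. A secondary point: for that diagonal argument one needs H\"older and gradient control uniform up to $t=0$ and insensitive to the artificial boundary, which the paper gets from the boundary versions of the cited theorems together with the cutoff; an appeal to purely interior regularity, as in your third step, would not suffice by itself.
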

\begin{proof}
Consider the initial-boundary value problem
\aaa{
\lb{initial-boundary}
\begin{cases}
\pl_t y(t,x) = \nu \Dl y(t,x) - (g(t,x,y),\nab)y(t,x) + \zeta(x)f(t,x,y), \\
 y(0,x) = \ffi(x)\zeta(x), \quad y(t,x)\Big|_{\pl B_r} = 0,
 \end{cases}
} 
where $B_r$ is an open ball of radius $r>1$, $\pl B_r$ is its boundary,
and $\zeta(x)$ is a smooth cutting function of the ball $B_{r-1}$ with the properties
$\zeta(x) = 1$ if $x\in B_{r-1}$, $\zeta(x) = 0$ if $x\notin B_r$, $0\lt \zeta(x)\lt 1$,
$\zeta(x)$ is bounded together with its derivatives of all orders.
Consider, for example, a mollified indicator function of the ball $B_{r-\frac12}$: $\zeta = \ind_{B_{r-\frac12}} \ast \, \rho_{\frac12}$,
where $\rho_{\frac12}$ is the standard mollifier supported on the ball of radius $\frac12$.
The class of initial-boundary value problems for systems of quasilinear parabolic PDEs,
which includes problem \rf{initial-boundary}, was considered  by Ladyzhenskaya et al. in \cite{lady}
(Theorem 7.1, p. 596).  By Theorem 7.1 of \cite{lady},  problem \rf{initial-boundary}
possesses a unique solution on $[0,T]\x \ovl B_r$ which belongs to 
the H\"older space $\C^{1+\frac{\beta}2,2+\beta}([0,T]\x \ovl B_r)$ with the norm
\mm{
\|y\|_{\C^{1+\frac{\beta}2,2+\beta}([0,T]\x \ovl B_r)} =
\|y\|_{\C^{1,2}([0,T]\x \ovl B_r)} + \sup_{t\in [0,T]}[\pl_t y]_{\beta}^x + 
\sup_{t\in [0,T]}[\pl^2_{xx} y]_{\beta}^x \\
+ \sup_{x\in \ovl B_r}[\pl_t y]_{\frac{\beta}2}^t + \sup_{x\in \ovl B_r}[\pl_x y]_{\frac{1+\beta}2}^t
+ \sup_{x\in \ovl B_r}[\pl^2_{xx} y]_{\frac{\beta}2}^t.
} 
First of all, we note that
by Theorem 6.1 (p. 592) from \cite{lady}, there exists a bound for the gradient $\pl_x y(t,x)$ that only
depends on $\nu$, $\mc C$, $T$, and the bounds for $|\ffi(x)|$ and $|\nab \ffi(x)|$.
Also, according to the results of $\S$7 from \cite{lady}, the bound for $y(t,x)$
depends only on $\mc C$, $T$,  and the bound for  $|\ffi(x)|$.

Furthermore,  by Theorem 5.1 from \cite{lady} (Chapter VII), 
the $\C^{1+\frac{\beta}2,2+\beta}([0,T]\x \ovl B_{r-1})$-norm of the solution $y(t,x)$ possesses 
a bound that depends only on $\mc C$, $\nu$, $T$,
and on the H\"older norm $\|\ffi\|_{\C^{2+\beta}_b(\Rnu^d)}$. 
Note that the bound for this norm does not depend on the radius $r$ of $B_r$.

To prove the existence of solution to \rf{rforced}, we employ
the diagonalization argument similar to the one presented in \cite{lady} (p. 493) 
for the case of one equation.
Take a closed ball $\ovl B_R$ of radius $R$. Let $y_r(t,x)$ be the solution to
problem \rf{initial-boundary} in the ball $B_{r+1}$ with $r>R$. 
Since the H\"older norms 
$\|y_r\|_{\C^{1+\frac{\beta}2,2+\beta}([0,T]\x \ovl B_r)}$ possess a bound not depending on $r$, then, by Arzel\`a-Ascoli's theorem,
the family of functions $y_r(t,x)$, parametrized by $r$, is relatively compact in $\C^{1,2}([0,T]\x \ovl B_R)$. Therefore,
we can find a sequence $\{y_{r_n}\}$ which converges in $\C^{1,2}([0,T]\x \ovl B_R)$.
 We can also find a further subsequence $\{y^{(1)}_{r_n}\}$
that converges in $\C^{1,2}([0,T]\x \ovl B_{R+1})$. Proceeding this way,
we find a subsequence $\{y^{(k)}_{r_n}\}$ that converges in $\C^{2,1}([0,T]\x \ovl B_{R+k})$.
It remains to note that the diagonal subsequence  $\{y^{(n)}_{r_n}\}$ 
converges at each point of $[0,T] \x \Rnu^d$ to a function $y(t,x)$,
while its derivatives $\pl_t y^{(n)}_{r_n}$, $\pl_x y^{(n)}_{r_n}$, and $\pl^2_{xx} y^{(n)}_{r_n}$
converge to the corresponding derivatives of $y(t,x)$.
Clearly, $y(t,x)$ is a solution to \rf{rforced}. 
Since the  $\C^{1,2}$-norm of each function $y^{(n)}_{r_n}(t,x)$ has
the same bound, the solution $y(t,x)$ belongs to $\C^{1,2}_b([0,T]\x \Rnu^d)$.
 Moreover, the bound for $y(t,x)$ depends 
only on  $\mc C$, $T$, and the bound for $|\ffi(x)|$;
the bound for $\pl_x y(t,x)$ depends on $\nu$, $\mc C$, $T$, and the bounds for $|\ffi(x)|$
 and $|\nab \ffi(x)|$.

\end{proof}

\subsection{Examples of the noise process}

\textit{Example 1: Stochastic integral.} $\eta(t,x) = \int_0^t g(s,x) dB_s = \sum_{i=1}^l \int_0^t g_i(s,x)dB^i_s$, where 
$g_i(t,x) \in \C^{0,2+\beta}([0,T]\x\Rnu^d)$, $\beta \in (0,1)$, is such that $g_i(t,x) = g_i(t,x) \zeta(x)$ with
 $\zeta(x)$ being a $\C^\infty$-cutting function of the ball $\Tor_R = \{x\in \Rnu^d: |x| < R\}$ described
 in the previous subsection; further,
$B^i_t$ are independent real-valued Brownian motions, and the stochastic integral is defined for each $x\in\Rnu^d$.

\begin{lem}
\lb{lem99}
There is a version of 
the stochastic integral $\int_0^t g(s,x) dB_s$ which belongs to  $\C^{0,2}([0,T] \x \Rnu^d)$.
\end{lem}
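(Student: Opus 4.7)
\medskip

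\noindent\textbf{Proof plan.} The strategy is to build the candidate space derivatives as independent stochastic integrals, upgrade them to jointly continuous versions via Kolmogorov, and then identify them as genuine derivatives using a stochastic Fubini argument.

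\smallskip

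First, for every multi-index $\alpha$ with $|\alpha|\lt 2$, I would introduce
\aa{
I^{(\alpha)}(t,x) = \int_0^t \pl_x^\alpha g(s,x)\, dB_s = \sum_{i=1}^l \int_0^t \pl_x^\alpha g_i(s,x)\, dB_s^i,
}
well defined for each fixed $(t,x)$ because $\pl_x^\alpha g_i$ is continuous and compactly supported in $x$ (due to the cutoff $\zeta$) and thus uniformly bounded on $[0,T]\x\Rnu^d$. Denote $I = I^{(0)}$.

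\smallskip

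Next, I would use the Burkholder--Davis--Gundy inequality together with the regularity of $g$ to control joint increments. Splitting
\aa{
I^{(\alpha)}(t,x) - I^{(\alpha)}(t',x') = \int_{t'}^{t} \pl_x^\alpha g(s,x)\, dB_s + \int_0^{t'} \bigl[\pl_x^\alpha g(s,x) - \pl_x^\alpha g(s,x')\bigr]\, dB_s
}
and applying BDG with exponent $p$ large enough gives, for $|\alpha|\lt 2$ and using $\pl_x^\alpha g\in\C^{0,\beta}_b$ in $x$ when $|\alpha|=2$ (and Lipschitz in $x$ when $|\alpha|\lt 1$),
\aa{
\E\bigl|I^{(\alpha)}(t,x) - I^{(\alpha)}(t',x')\bigr|^p \lt C_p\bigl(|t-t'|^{p/2} + |x-x'|^{p\gamma_\alpha}\bigr),
}
with $\gamma_\alpha = 1$ for $|\alpha|\lt 1$ and $\gamma_\alpha = \beta$ for $|\alpha|=2$. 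Choosing $p$ sufficiently large so that $p\gamma_\alpha > d+2$ and $p/2 > d+2$, Kolmogorov's continuity criterion on $[0,T]\x\Tor_R$ (and the fact that $\pl_x^\alpha g$ vanishes outside $\Tor_R$, so $I^{(\alpha)}(t,x)$ is constant in $x$ outside $\Tor_R$) produces a version of $I^{(\alpha)}$ that is jointly continuous on $[0,T]\x\Rnu^d$.

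\smallskip

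Finally, I would identify the continuous versions $I^{(e_i)}$ and $I^{(e_i+e_j)}$ with the actual partial derivatives of $I$. Fix $t\in[0,T]$ and $x_0\in\Rnu^d$; by the classical stochastic Fubini theorem (applicable since $\pl_{x_i}g$ is jointly continuous and compactly supported, hence square-integrable against $ds\,d\PP$ uniformly in the spatial parameter),
\aa{
\int_0^h I^{(e_i)}(t, x_0 + r e_i)\, dr &= \int_0^h \int_0^t \pl_{x_i}g(s, x_0+r e_i)\, dB_s\, dr \\
&= \int_0^t \int_0^h \pl_{x_i}g(s, x_0+r e_i)\, dr\, dB_s \\
&= \int_0^t \bigl[g(s, x_0+h e_i) - g(s, x_0)\bigr]\, dB_s = I(t,x_0+h e_i) - I(t,x_0)
}
almost surely. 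Continuity in $x$ of both sides promotes this to an identity holding simultaneously for all $h$ on one full-measure event. Dividing by $h$ and sending $h\to 0$ and invoking continuity of $I^{(e_i)}$ yields $\pl_{x_i}I(t,x) = I^{(e_i)}(t,x)$ for all $x$. Iterating the same argument starting from $I^{(e_i)}$ gives $\pl_{x_j}\pl_{x_i} I = I^{(e_i+e_j)}$, which is continuous in $(t,x)$. Combined with joint continuity of $I$ itself, this yields $I\in\C^{0,2}([0,T]\x\Rnu^d)$.

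\smallskip

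The main technical obstacle is ensuring that the exceptional null sets appearing in the stochastic Fubini and in the pointwise differentiation can be chosen uniformly in the spatial parameter; the compact support of $g$ and the joint continuity guaranteed by Kolmogorov's criterion are precisely what circumvent this difficulty.
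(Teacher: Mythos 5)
Your proof is correct and takes the same route the paper indicates: the paper's entire proof is the one-sentence remark that the lemma ``follows from Kolmogorov's continuity theorem,'' and your argument is a full implementation of exactly that. The only substantive addition is the stochastic Fubini step identifying the continuous versions of $\int_0^t \pl_x^\alpha g\, dB_s$ as the genuine spatial derivatives of the integral --- a step that is in fact necessary (Kolmogorov alone gives only continuity) and that the paper leaves implicit, so your write-up is a faithful and complete expansion of the intended argument.
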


The proof of Lemma \ref{lem99} follows from Kolmogorov's continuity theorem.
%
%

\textit{Example 2: $x$-regularized space-time white noise.}  Let $\dot W^i(t,x)$, $i=1,\ldots, d$, 
be independent space-time white noises, and let
$\dot W^i_\eps(t,x)$ be a regularization in $x$ of $\dot W^i(t,x)$, that is,
$\dot W^i_\eps(t,x) = (\dot W^i(t,\fdot) \ast \rho_{\eps})(x)$, where 
$\rho_{\eps}$ is a standard mollifier supported on the ball of radius $\eps$.
Alternatively, one can write 
$W^i_\eps(t,x) = (W^i(t,\fdot) \ast \pl^{n}_{x_1\ldots x_n}\rho_{\eps})(x)$, where $W^i(t,x)$  
is an $(n+1)$-parameter Brownian sheet on $\Rnu^d$.
Since we are interested in noises of class $\C^{0,2}_b(\Rnu^n)$, define $\dot \eta^i(t,x)$ as $\dot W^i_\eps(t,x) \zeta(x)$,
where $\zeta(x)$ is a $\C^\infty$-cutting function of a bounded domain $D\sub\Rnu^d$.

 \section{Classical solution to equation \rf{burgers}}
 
 In this section, we will prove the existence of a classical solution to equation \rf{burgers}
 under  assumptions (A1) and (A2):
 \bi
  \item[\bf (A1)] $\eta(t,x)$ is of class $\C^{0,2}_b([0,T]\x\Rnu^d)$;
 \item[\bf (A2)] the initial data $\ffi(x)$ is of class $\C^\beta_b(\Rnu^d)$, $\beta\in (0,1)$.
 \ei

\begin{thm}
\lb{thm3.1}
Let (A1) and (A2) be fulfilled. Then equation \rf{burgers} has a solution.
\end{thm}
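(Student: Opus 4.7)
The plan is to fit equation~\rf{burgers} into the framework of \rf{generalized-burgers} via the substitution $u = y - \eta$, which converts \rf{burgers} into a PDE for $u$ that is differentiable in $t$, namely
\begin{equation*}
\pl_t u = \nu\lap u - (u+\eta,\pl_x) u + \bigl[\nu\lap\eta - (u+\eta,\pl_x)\eta\bigr], \qquad u(0,x)=\ffi(x),
\end{equation*}
corresponding to the choices $g(t,x,u) = u + \eta$ and $f(t,x,u) = \nu\lap\eta - (u+\eta,\pl_x)\eta$. To satisfy the time-Hölder hypothesis of Theorem~\ref{lem11}, I would also introduce a time-mollified noise $\eta_m\to\eta$ in $\C^{0,2}_b$ together with mollified initial data $\ffi_m\in\C^{2+\beta}_b(\Rnu^d)$ with $\ffi_m\to\ffi$ uniformly and $\|\ffi_m\|_{\C^\beta_b}$ uniformly bounded in $m$. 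Theorem~\ref{lem11} then delivers classical solutions $y_m = u_m + \eta_m \in \C^{1,2}_b$ of the mollified Burgers equations, with sup-norm bounds depending only on $\mc{C}$, $T$, and $\sup|\ffi|$, hence uniform in $m$.

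The next step, which I expect to be the main obstacle, is to produce Hölder estimates on $y_m$ that are uniform in $m$. Treating $(y_m,\pl_x)y_m$ as a transport term with $L^\infty$-bounded coefficient $y_m$, the equation becomes a linear parabolic system with bounded drift; De Giorgi--Nash--Moser estimates together with parabolic Schauder theory up to the initial line should produce a uniform bound $\|y_m\|_{\C^{\beta'/2,\beta'}([0,T]\x\Rnu^d)}\lt C$ for some $\beta'\in(0,\beta]$, depending only on the $\C^\beta_b$-norm of $\ffi$ and not on higher derivative norms of the mollified $\ffi_m$. By Arzel\`a--Ascoli and a diagonal extraction a subsequence $y_{m_k}\to y^{(1)}$ in $\C^0_{\mathrm{loc}}([0,T]\x\Rnu^d)$ with $y^{(1)}\in\C^{\beta'/2,\beta'}_b$. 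Interior parabolic regularity further upgrades this to $\C^{1,2}$ convergence on every strip $[\dl,T]\x K$, so passing to the limit in the integral identity satisfied by $y_{m_k}$ shows that $y^{(1)}$ solves \rf{burgers} classically on $(0,T]\x\Rnu^d$.

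To close the argument up to the initial line, I would freeze $y^{(1)}$ in the transport term and consider the linear equation
\begin{equation*}
\pl_t v = \nu\lap v - (y^{(1)},\pl_x) v, \qquad v(0,x)=\ffi(x),
\end{equation*}
(after the same substitution removing $\eta$). Since $y^{(1)}$ is Hölder in $(t,x)$ and $\ffi\in\C^\beta_b$, Friedman's classical theory~\cite{friedman} gives a unique $\C^{1,2}_b$ solution $v$ attaining the initial data continuously. Because $y^{(1)}$ itself also solves this linear equation with the same initial datum, uniqueness in the linear setting forces $v = y^{(1)}$, so $y^{(1)}$ is the desired classical solution. The delicate point throughout is keeping the parabolic estimates on the $y_m$ anchored to the $\C^\beta_b$-norm of $\ffi$, so that they do not degenerate when the initial regularisation is removed; this is precisely where the idea of first linearising and only then invoking Friedman becomes essential.
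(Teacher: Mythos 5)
Your overall architecture coincides with the paper's: subtract the noise, mollify $\ffi$ and $\eta$, invoke Theorem \ref{lem11} for the regularized problems, obtain bounds uniform in $m$ (the sup-norm bound read off the statement of Theorem \ref{lem11}, the H\"older bound from De Giorgi--Nash--Moser theory, which is exactly the role played in the paper by Theorem 1.1, Chapter V, of \cite{lady}), extract a limit $y^{(1)}$ by Arzel\`a--Ascoli plus diagonalization, and finally freeze $y^{(1)}$ in the transport term and appeal to Friedman. Where you genuinely diverge is in identifying the limit as a solution. The paper never upgrades the convergence $y_m\to y^{(1)}$ beyond local uniform convergence; instead it represents both the Friedman solution $y^{(2)}$ of the frozen linear equation and each $y_m$ as solutions of BSDEs driven by the same process $B^{\tau,x}_t=x+\sqrt{2\nu}\,(W_t-W_\tau)$, and an It\^o--Gronwall stability estimate \rf{est8} gives $\E|Y^m_t-Y_t|^2\to0$, hence $y_m\to y^{(2)}$ pointwise and $y^{(1)}=y^{(2)}$. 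You instead upgrade the convergence to $\C^{1,2}$ on strips $[\dl,T]\x K$ by interior Schauder estimates and conclude via uniqueness (a maximum-principle argument for bounded classical solutions) of the linear Cauchy problem. Your route is purely PDE-theoretic and more transparent; the paper's route only ever requires sup-norm convergence of $\eta_m$, $\pl_x\eta_m$, $\lap\eta_m$, $\ffi_m$, and the same stability estimate delivers the uniqueness statement (the paper's Step 6) essentially for free.

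There is, however, one step of yours that does not go through under (A1) as stated. Viewing the $m$-th equation as linear with drift $y_m+\eta_m$ and inhomogeneity $f_m=\nu\lap\eta_m-\pl_x\eta_m(t,x)\eta_m(t,x)-(y_m,\pl_x)\eta_m$, the interior Schauder estimate requires $f_m$ to be H\"older continuous in $x$ uniformly in $m$; but (A1) only gives $\eta\in\C^{0,2}_b$, so the H\"older norms of $\lap\eta_m=\lap\eta\ast\rho_m$ can blow up as $m\to\infty$. The paper's BSDE identification sidesteps this entirely, needing only bounded pointwise convergence of $\lap\eta_m$. You would need either to strengthen (A1) to $\eta\in\C^{0,2+\beta}_b$ (harmless for the intended application, where $\eta=0$), or to replace the upgrade by a stability argument that does not see derivatives of $f_m$. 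A smaller omission: to verify the integral identity \rf{burgers} down to $s=0$ you need $\|\lap y^{(1)}(s,\cdot)\|_\infty$ to be integrable near $s=0$; the identification $y^{(1)}=v$ with Friedman's volume-potential representation supplies this, but your interior estimates alone give only a qualitative $\C^{1,2}$ statement on $(0,T]$, so this should be made explicit.
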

\begin{proof}
\textit{Step 1. Transformation to a Burgers-type PDE.}
The substitution $\hat y(t,x) = y(t, x) - \eta(t,x)$
transforms \rf{burgers} to the following Burgers-type equation 
\aaa{
\lb{rforced}
\begin{cases}
\pl_t \hat y(t,x) =  \nu \lap \hat y(t,x) - (\hat y +\eta,\pl_x) \hat y(t,x) + f(t,x,\hat y),\\
\hat y(0,x) = \ffi(x),
\end{cases}
}
where  $f(t,x,y) = \nu\lap\eta(t,x) - \pl_x  \eta(t,x)\eta(t,x) - (y, \pl_x)\eta(t,x)$.

\textit{Step 2. Mollification.}
Let $\ffi_m$ be a mollification of $\ffi$,  
and $\eta_m(t,x)$ be a mollification of $\eta(t,x)$ in $(t,x)$, both by means of standard mollifiers.
Consider the problem
\aaa{
\lb{rforced1}
\begin{cases}
\pl_t \hat y(t,x) =  \nu \lap \hat y(t,x) - (\hat y +\eta_m,\pl_x) \hat y(t,x) + f_m(t,x,\hat y),\\
\hat y(0,x) = \ffi_m(x),
\end{cases}
}
where $f_m(t,x,y) = \nu\lap\eta_m(t,x) - \pl_x  \eta_m(t,x)\eta_m(t,x)- (y, \pl_x)\eta_m(t,x)$. 
By Theorem \ref{lem11}, the above problem
has a unique $\C^{1,2}_b([0,T]\x \Rnu^m)$-solution $y_m(t,x)$.

\textit{Step 3. Uniform boundedness of $y_m(t,x)$.} 
Note that $\bar y_m(t,x)=y_m(T-t,x)$
 is the unique $\C^{2,1}_b([0,T],\Rnu^d)$-solution to 
\aaa{
\lb{backward-pde2}
y(t,x) = \ffi_m(x) + \int_t^T \big[ \nu \lap  y(s,x) - ( y+\bar\eta_m, \pl_x) y(s,x)  + \bar f_m(s,x,y)] ds,
}
where $\bar f_m(t,x,y) = f_m(T-t,x,y)$, $\bar \eta_m(t,x) = \eta_m(T-t,x)$.

Consider the forward-backward SDE (FBSDE) associated to \rf{backward-pde2}:
\aaa{
\lb{fbsde-alt}
\begin{cases}
X^{\tau,x}_t = x+ \int_\tau^t (Y^{\tau,x}_s +\bar \eta_m(s,X^{\tau,x}_s) )ds + \sqrt{2\nu}\, (W_t-W_\tau), \\
Y^{\tau,x}_t = \ffi_m(X^{\tau,x}_T) + \int_t^T \bar f_m(s,X^{\tau,x}_s, Y^{\tau,x}_s) ds 
 - \sqrt{2\nu} \int_t^T Z^{\tau,x}_s dW_s.
\end{cases}
}
Let $X^{\tau,x}_t$ be 
a solution to
\aa{
X^{\tau,x}_t = x+ \int_\tau^t (\bar y_m(s,X^{\tau,x}_s) +\bar \eta_m(s,X^{\tau,x}_s)) ds + \sqrt{2\nu}\, (W_t-W_\tau).
}
It is known that (see, e.g, \cite{ma}) 
\aaa{
\lb{form2}
Y^{\tau,x,m}_t = \bar y_m(t, X^{\tau,x}_t), \qquad  Z^{\tau,x,m}_t = \pl_x \bar y_m(t,X^{\tau,x}_t)
}
is a solution to \rf{fbsde-alt}.
In what follows,
for any function $\phi(t,x)$, $\nab \phi(t,x)$ will denote
$\pl_x \phi(t,x)$  not to confuse with the partial derivative w.r.t. the upper index $x$ in $(X^{\tau,x}_t, Y^{\tau,x}_t, Z^{\tau,x}_t)$.
Also, for simplicity of notation, sometimes we skip the upper index $\tau,x$. 

It\^o's formula, applied to the BSDE in \rf{fbsde-alt}, implies
\mmm{
\E |Y^m_t|^2  \lt  \E|\ffi_m(X_T)|^2 + \int_t^T \E|\nu\lap\bar \eta_m(t,X_s) - \nab\bar \eta_m(t,X_s)\bar \eta_m(t,X_s)|^2 ds \\
+ \int_t^T\E (1+  2|\nab \bar\eta_m(s,X_s)|)|Y^m_s|^2 ds.
}
Since $\bar \eta_m$, $\nab \bar \eta_m$, $\lap \bar \eta_m$, and $\ffi_m$ are bounded uniformly in $m$,
by Gronwall's inequality, $\E |Y^{\tau,x,m}_t|^2$, and, consequently, $\bar y_m(\tau,x)$, is bounded over 
$[0,T)\x\Rnu^d$ uniformly in $m$. 

\textit{Step 4. $y_m(t,x)$ contains a pointwise converging subsequence.}
 By Theorem 1.1 (Chapter V) in \cite{lady},  under (A1) and (A2), $y_m(t,x)$ is of class $\C^{\frac{\al}2,\al} ([0,T]\x\Tor_R)$ for any ball 
 $\Tor_R =\{|x| < R\}$,
  where the exponent $\al$ and the H\"older constant does not depend on $m$.
  By the Arzel\'a-Ascoli theorem, the family $y_m(t,x)$ is relatively compact in $\C([0,T],\ovl\Tor_R)$. Therefore, there exists a subsequence $y_{m_k}(t,x)$
  and a function $y^{(1)}(t,x) \in \C([0,T],\ovl\Tor_R)$ which is the uniform limit of $y_{m_k}(t,x)$ on $[0,T]\x\ovl\Tor_R$.
  Choosing a further subsequence of $y_{m_k}(t,x)$, we prove that there exists its uniform limit (which we again denote by $y^{(1)}(t,x)$)
  on $[0,T]\x\ovl\Tor_{R+1}$.
  For every $N\in\Nnu$, in a finite number of steps, we find a subsequence of $y_m$ converging uniformly on $[0,T]\x\ovl\Tor_{R+N}$.
  Its limit function, again, will be denoted by $y^{(1)}(t,x)$. Passing to the diagonal subsequence, we obtain a subsequence 
  converging to $y^{(1)}(t,x)$ pointwise on $[0,T]\x\Rnu^d$ and uniformly on each closed ball $\ovl\Bl_R$ centered at the origin.
  Remark that since $y_m$ is of class $\C^{\frac{\al}2,\al} ([0,T]\x \Bl_R)$, 
  so is $y^{(1)}(t,x)$.
  Also remark that if $R$ is fixed, by aforementioned  Theorem 1.1 (Chapter V) in \cite{lady}, 
  due to the global boundedness of $\eta_m$, $\pl_x\eta_m$, and $f_m$  (uniform in $m$),
  the H\"older constant
  for  $y_m$  in a ball $K_R=\{x: |x-a| < R\}$ does not depend on the center $a$ of the ball. This implies that this  H\"older constant
  is uniform in $m$ over $\Rnu^d$. Therefore, $y^{(1)} \in \C^{\frac{\al}2,\al} ([0,T]\x\Rnu^d)$.

  The pointwise converging (diagonal) subsequence constructed above, we again denote by $\{y_m\}$, i.e., 
  $y^{(1)}(t,x) = \lim_{m\to \infty} y_m(t,x)$.
  
  \textit{Step 5. The limit of $y_m$ is a solution to equation \rf{rforced}.}
 Consider a linear PDE with respect to $v$:
\eq{
\lb{cauchy-v} 
\pl_t v(t,x) =  \nu \lap v(t,x) - (y^{(1)} +\eta, \pl_x)v(t,x) + f(t,x, y^{(1)}(t,x)),\\
v(0,x) = \ffi(x).
}
By Theorem 12 (Chapter 1, Paragraph $\S$7) in \cite{friedman}, 
\aaa{
\lb{y2}
y^{(2)}(t,x) = \int_{\Rnu^d} G(t,x;0,z) \ffi(z) dz  + \int_0^t \int_{\Rnu^d} G(t,x;s,z)  f(t,z, y^{(1)}(t,z)) dz ds
}
is a solution to problem \rf{cauchy-v} which is continuous on $[0,T]\x\Rnu^d$ and is of class $\C^{1,2}$ on 
 $(0,T]\x\Rnu^d$, where $G(t,x;s,z)$ is the fundamental solution for the differential operator 
 $\nu \lap  - (y^{(1)} +\eta, \pl_x)$.
 
We use the associated BSDE to show that $y^{(2)}$ is also  a limit of $y_m$. This will prove
 that $\hat y(t,x) = y^{(2)}(t,x) = y^{(1)}(t,x)$ is a solution to problem \rf{rforced}, and, therefore,
 $y(t,x) = \hat y(t,x) + \eta(t,x)$ is a solution to original equation \rf{burgers}.

Define $B^{\tau,x}_t = x+\sqrt{2\nu}\, (W_t-W_\tau)$,   $\bar y^{(1)} (t,x) = y^{(1)} (T-t,x)$, and
note (see, e.g., \cite{ma}) that
  \aa{
  Y^{\tau,x}_t = y^{(2)} (T-t,B^{\tau,x}_t), \qquad
 Z^{\tau,x}_t = \pl_x y^{(2)}(T-t,B^{\tau,x}_t)
 }
 satisfies the BSDE
\mmm{
\lb{fbsde4}
Y^{\tau,x}_t =  y^{(2)}(\eps, B^{\tau,x}_T) + \int_t^{T-\eps} \big[\bar f(s,B^{\tau,x}_s, \bar y^{(1)}(s,B^{\tau,x}_s)) \\
 - Z^{\tau,x}_s (\bar y^{(1)}(s,B^{\tau,x}_s)+\bar \eta(s,B^{\tau,x}_s)) 
\big]ds 
- \sqrt{2\nu} \int_t^{T-\eps} Z^{\tau,x}_s dW_s,
} 
where $\eps>0$ is a sufficiently small number that appears due to the fact that $y^{(2)}$ may not be differentiable in $t$ at zero.
Further, since $f$ and $\ffi$ in \rf{y2} are bounded,
then $y^{(2)}$ is  also bounded. Indeed, the fundamental solution $G(t,x;s,z)$ possesses 
estimates by Gaussian densities (see, e.g., \cite{friedman}, Chapter 1, formula (6.12)). This implies the boundedness of  $Y^{\tau,x}_t$,
and, therefore, by It\^o's formula, there exists a constant $k>0$, independent of $\eps$, such that
\aa{
\E \int_t^{T-\eps} |Z_s|^2 ds \lt k.
}
Note that the above identity  also  holds for $\eps = 0$ which allows us to consider \rf{fbsde4} for $\eps = 0$.
On the other hand, 
\aaa{
\lb{form4}
Y^{\tau,x,m}_t = \bar y_m(t, B^{\tau,x}_t),  \qquad  Z^{\tau,x,m}_t = \pl_x \bar y_m(t,B^{\tau,x}_t)
}
is a solution to 
 \mmm{
\lb{fbsde}
Y^{\tau,x,m}_t = \ffi_m(B^{\tau,x}_T) +
 \int_t^{T} \big[\bar f_m(s,B^{\tau,x}_s,  Y^{\tau,x,m}_s) \\  - Z^{\tau,x,m}_s (Y^{\tau,x,m}_s + \bar\eta_m(s,B^{\tau,x}_s))
\big]ds 
- \sqrt{2\nu} \int_t^{T}  Z^{\tau,x,m}_s dW_s,
}
where $\bar y_m$ is the solution to \rf{backward-pde2}.
Remark that \rf{form4} is a different process than \rf{form2}, introduced earlier. However, we use the same symbols to simplify notation.
Since $Y^{\tau,x,m}_t$, $\bar\eta_m(t,B^{\tau,x}_t)$, and $\bar y^{(1)}(t,B^{\tau,x}_t)$ are globally bounded by a constant that does not depend on $m$,
 by the standard argument involving It\^o's formula, from \rf{fbsde4} and \rf{fbsde} we obtain
 \mmm{
 \lb{est8}
 \E|Y^{m}_t - Y_t |^2 + 2\nu\, \E\int_t^{T} |Z^{m}_s- Z_s|^2 ds
 \lt  \E| \ffi(B^{\tau,x}_T)  - \ffi_m(B^{\tau,x}_T)|^2\\
 +\epsilon\, \E \int_t^{T}  |Z^{m}_s - Z_s|^2 ds + K \, \E \int_t^T |Y^{m}_s - Y_s|^2 ds \\
+ \E\int_t^T|Z_s|^2  \big(|\bar y^{(1)} - \bar y_m|^2+ |\bar\eta - \bar\eta_m|^2\big)(s,B^{\tau,x}_s)  ds \\
+ \E \int_t^T  \big(|\bar y^{(1)} - \bar y_m|^2+ |\nab \bar\eta_m-\nab\bar \eta|^2
+  |\bar\eta_m-\bar \eta|^2 + |\lap \eta_m - \lap \eta|\big) (s,B^{\tau,x}_s)ds,
  }
  where $\epsilon>0$ is a number smaller than $2\nu$ and $K>0$ is a constant.
   Above, we skipped the upper index ${\tau,x}$ in $Y^{m,\tau,x}_t$,
  $Z^{m,\tau,x}_t$, $Y^{\tau,x}_t$, and $Z^{\tau,x}_t$ to simplify notation.
Note that the first term and the terms in the last two lines on the right-hand side go to zero by Lebesgue's dominated convergence theorem. 
  Therefore, by Gronwall's inequality, $ \E|Y^{m}_t - Y_t |^2 \to 0$ as $m\to \infty$, and hence,
 $\lim_{m\to \infty} y_m=y^{(2)}$ pointwise on $[0,T]\x \Rnu^d$. This proves that
 $y^{(1)}(t,x) = y^{(2)}(t,x)$. 
 
 \textit{Step 6. Uniqueness.} 
 Suppose there are two solutions  $y_1, y_2\in\C^{0,2}_b((0,T]\x\Rnu^d)$ to problem \rf{burgers},
 and let $Y^1_t$ and $Y^2_t$ be the solutions to the associated BSDEs of form \rf{fbsde4} (with $\eps=0$):
 \aa{
Y^{i}_t =  \ffi(B^{\tau,x}_T) + \int_t^{T} \big[\bar f(s,B^{\tau,x}_s, Y^i_s) 
 - Z^{i}_s (Y^i_s+\bar \eta(s,B^{\tau,x}_s)) 
\big]ds 
- \sqrt{2\nu} \int_t^{T} Z^{i}_s dW_s,
} 
$i=1,2$. The same argument as in Step 3 implies the boundedness of $y_1$ and $y_2$ by the same constant. 
Since $\int_0^T |Z^i_s|^2 ds \lt k$ for some constant $k$ (which follows from It\^o's formula), then
an estimate similar to \rf{est8} implies that $\E|Y^1_t - Y^2_t|^2 = 0$ by Gronwall's inequality. 
Hence, $y_1 = y_2$. 
The theorem is proved.
\end{proof}

Finally, we prove that the solution to \rf{burgers} is adapted to the same filtration as $\eta(t,x)$.

\begin{thm}
\lb{adapted}
Let $\mc G_t$ be a filtration and let $\eta(t,x)$ be $\mc  G_t$-adapted for each $x\in\Rnu^d$, 
Then, the solution 
$y(t,x)$ to \rf{burgers} is  also $\mc  G_t$-adapted.
\end{thm}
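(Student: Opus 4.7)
My plan is to exploit the approximation scheme of Theorem~\ref{thm3.1} pathwise: I will verify that each mollified solution $y_m(t,x)$ constructed in Step~2 of that proof is $\mc G_t$-measurable, and then transfer this property to $y$ via the pointwise limit obtained in Steps~4--5. Two ingredients underpin the argument: (a)~the mollification of $\eta$ can be chosen to preserve $\mc G_t$-adaptedness, and (b)~a classical solution of a quasilinear parabolic IVP at time $t$ is a deterministic functional of its coefficients restricted to $[0,t]$.

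For (a), I would take the time-mollifier $\rho_m$ supported in $[0,1/m]$ (setting $\eta(s,\fdot)\equiv\eta(0,\fdot)$ for $s<0$, which is $\mc G_0$-measurable) and a spatial mollifier $\theta_m$, so that
\aa{
\eta_m(t,x) = \int_0^{1/m}\!\!\int_{\Rnu^d}\eta(t-s,y)\,\rho_m(s)\,\theta_m(x-y)\,dy\,ds
}
depends only on $\eta(r,\fdot)$ for $r\in[t-1/m,t]$. Consequently $\eta_m(t,x)$, $\pl_x\eta_m(t,x)$ and $\lap\eta_m(t,x)$ are $\mc G_t$-measurable for every $x$, and therefore so is the inhomogeneity $f_m(t,x,y)$ as a functional of $y$. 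The uniform bounds used in the proof of Theorem~\ref{thm3.1} are unaffected, so the convergence conclusions there remain valid with this modified choice.

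For (b), I observe that for each fixed $\omega\in\Omega$ the equation \rf{rforced1} is a deterministic quasilinear parabolic system to which Theorem~\ref{lem11} applies, producing the unique classical solution $y_m(t,x)(\omega)$. Forward-in-time well-posedness of parabolic Cauchy problems implies that $y_m(t,x)(\omega)$ depends only on $\eta_m$, $\pl_x\eta_m$, $\lap\eta_m$ and $\ffi_m$ restricted to $[0,t]\x\Rnu^d$; combined with (a), this yields that $y_m(t,x)$ is $\mc G_t$-measurable. Pointwise convergence of the diagonal subsequence $y_m\to y^{(1)}=\hat y$ from Steps~4--5 then transfers measurability to $\hat y$, and the identity $y(t,x)=\hat y(t,x)+\eta(t,x)$ completes the argument.

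The main obstacle is the measurability claim inside (b), since Theorem~\ref{lem11} produces $y_m$ abstractly via Ladyzhenskaya's theorem rather than as an explicit functional of the coefficients. The cleanest way around this is to construct $y_m$ as the limit of Picard iterates in a suitable H\"older norm: each iterate is manifestly $\mc G_t$-measurable by induction from the adaptedness of $\eta_m$, the limit inherits measurability, and uniqueness in Theorem~\ref{lem11} identifies it with $y_m$. Alternatively, one invokes continuous dependence of the quasilinear parabolic solution on its coefficients in the $\C^{1+\beta/2,\,2+\beta}$-norm via the a priori estimates from \cite{lady}, and then composes this Borel-measurable solution map with the $\mc G_t$-measurable random element $\eta_m|_{[0,t]\x\Rnu^d}$.
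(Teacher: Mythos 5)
Your overall architecture coincides with the paper's: reduce to \rf{rforced}, show each approximating solution $y_m$ of \rf{rforced1} is $\mc G_t$-measurable, and transfer measurability to $\hat y$ through the pointwise (diagonal-subsequence) limit from Theorem \ref{thm3.1}, finishing with $y=\hat y+\eta$. Where you genuinely diverge is in the crucial step of why the abstractly constructed classical solution is a causal functional of the data. The paper opens up the Leray--Schauder construction from \cite{lady}: it writes the linearized map $\Gm:v\mto y$ of \rf{initial-boundary1} via the Green function of the heat equation on the ball $B_r$, observes that this representation involves $\eta$ and $v$ only on $[0,t]$, and concludes that the fixed point is adapted; adaptedness then passes to the Cauchy problem as a pointwise limit over $r\to\infty$. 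You instead propose Picard iteration in a H\"older norm (each iterate adapted by induction, the limit identified with $y_m$ by uniqueness), or alternatively a measurable solution map composed with the adapted coefficient process. Your route is arguably tighter on one point: a fixed point produced non-constructively by Leray--Schauder does not automatically inherit adaptedness from the fact that $\Gm$ maps \emph{deterministic} inputs to adapted outputs, so the paper's ``therefore the fixed point is adapted'' implicitly needs exactly the iteration-plus-uniqueness argument you spell out. You also flag a genuine gap that the paper passes over in silence: the standard (two-sided) time mollifier makes $\eta_m(t,\fdot)$ only $\mc G_{t+1/m}$-measurable, so without your one-sided mollifier the limit is a priori only $\mc G_{t+}$-adapted; your causal mollification fixes this cleanly without disturbing the uniform bounds. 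The remaining soft spot in your plan is the convergence of Picard iteration for the quasilinear system, which holds only on a short time interval and must be continued using the a priori bounds; this is standard but should be said.
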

\begin{proof}
First, let us prove that if for each $(x,y)$, $\eta(t,x)$ and $f(t,x,y)$ are  $\mc  G_t$-adapted, 
so is the solution to \rf{rforced}.
We can prove the theorem for a smooth initial data and a smooth noise since according to the proof
of Theorem \ref{thm3.1}, the solution to \rf{rforced} can be approximated pointwise by solutions to \rf{rforced1}.

By \cite{lady}, the central argument in the proof of existence for initial-boundary value problem 
\rf{initial-boundary} is the Leray-Schauder theorem. According to the latter, the map
$\Gm: v\mto y$ defined by 
 \aaa{
\lb{initial-boundary1}
\begin{cases}
\pl_t y(t,x) = \nu \Dl y(t,x) - (v(t,x) + \eta(t,x),\nab)v(t,x) + \zeta(x)f(t,x,v), \\
 y(0,x) = \ffi(x)\zeta(x), \quad y(t,x)\Big|_{\pl B_r} = 0,
 \end{cases}
} 
has a fixed point. Here,
 the cutting function $\zeta(x)$ is defined as in the proof of Theorem \ref{lem11}. Hence,
\mmm{
y(t,x) = \int_{B_r} G(x,z,t) \ffi(z) \zeta(z) dz  \\ +\int_0^t \int_{B_r} 
G(x,z,t-s) \big( \zeta(z)f(s,z,v(s,z)) - (v(s,z) + \eta(s,z),\pl_z)v(s,z)\big) dz ds,
}
where $G(x,z,t)$ is the Green function for the heat equation $\pl_t y(t,x) = \nu \Dl y(t,x)$ on the ball $B_r$
(with zero boundary condition). If $v(t,x)$ is a $\C^{0,1}([0,T]\x B_r)$
deterministic function, then $y(t,x) = \Gm(v)(t,x)$ is clearly $\mc G_t$-adapted. 
Therefore, the fixed point of $\Gm$, which is the solution to \rf{initial-boundary1}, is $\mc G_t$-adapted.
 The solution to Cauchy problem \rf{rforced} is then $\mc G_t$-adapted as a pointwise limit (by construction) of solutions to initial-boundary
value problems \rf{initial-boundary1}. 

Observe that since $\eta(t,x)$ is $\mc G_t$-adapted for each $x$, $f(t,x,y)$, defined via $\eta(t,x)$ in Step 1 of the proof
of Theorem \ref{thm3.1}, is also  $\mc G_t$-adapted. The connection between the solutions to problems \rf{rforced}
and  \rf{burgers} implies that the latter is $\mc G_t$-adapted.
\end{proof}

\section*{Acknowledgements}
The research of the first-named author was partially supported by RFBR Grant N 18-01-00048.

\bibliographystyle{siamplain}

\begin{thebibliography}{99}
%
%
\bibitem{boritchev} \textsc{A. Boritchev},  \textit{Multidimensional Potential Burgers Turbulence},
Communications in Mathematical Physics, vol. 342, issue 2, (2016) pp. 441--489.
%
\bibitem{bec} \textsc{J. Bec and K. Khanin},  \textit{Burgers turbulence}, Phys. Rep, 447, (2007), pp. 1--66.
%


%
%
\bibitem{bertini} \textsc{L. Bertini, N. Cancrini, G. Jona-Lasinio},  \textit{The stochastic Burgers equation}, Comm. Math. Phys. 165 (1994) 211--232.
%
%
\bibitem{brzezniak} \textsc{Z. Brzezniak, B. Goldys, M. Neklyudov},  \textit{Multidimensional stochastic Burgers equation}, 
SIAM journal of Mathematical Analysis, vol. 46, no. 1, (2014), pp 871--889.
%

%
\bibitem{buchert92} \textsc{T. Buchert}, \textit{Lagrangian theory of gravitational instability of Friedman-Lemaitre cosmologies 
and the ``Zel'dovich approximation''}, MNRAS 254, (1992), pp. 729--737.
%
%
\bibitem{coles}
\textsc{P. Coles, A.L. Melott, S. Shandarin,} \textit{Testing approximations for non-linear gravitational 
clustering,} Mon. Not. R. Astron. Soc. 260 (1993), pp. 765--766.
%
%
\bibitem{engle} \textsc{N. Englezos, N. E. Frangos, X-I Kartala, and A N. Yannacopoulos}, 
 \textit{Stochastic Burgers PDEs with random coefficients and
a generalization of the Cole-Hopf transformation}, Stoch. Proc. and their Appl. 123, 8, (2013), pp. 3239--3272
 %
 \bibitem{gotoh}
\textsc{T. Gotoh,  R. H. Kraichnan}.  \textit{Burgers turbulence with large scale forcing}, Phys. Fluids A 10, (1998), pp. 2859--2866.
%
\bibitem{gurvatov-saichev1984} \textsc{S.N. Gurbatov, A.I.  Saichev}, 
\textit{Probability distribution and spectra of potential hydrodynamic turbulence}, 
Radiophysics and Quantum Electronics 27(4), (1984), pp. 303--313.
%
%
\bibitem{gurbatov}
\textsc{S. Gurbatov, A. Moshkov, A. Noullez}.  \textit{Evolution of anisotropic structures and turbulence in the multidimensional Burgers equation},
 Phys. Rev. E  81(4), (2010),  046312.
 %
 %
 \bibitem{gurbatov1} \textsc{S. Gurbatov, A. Saichev, S. Shandarin}, 
 \textit{Large-scale structure of the Universe. The Zeldovich approximation and the adhesion model,}
 Physics-Uspekhi, vol 55, 3, (2012), pp. 223--249.
%

\bibitem{friedman}
\textsc{A. Friedman,}
 \textit{Partial differential equations of parabolic type},
 Robert E. Krieger publishing company, 1983.
%
\bibitem{frish}
\textsc{U. Frisch, J. Bec} \textit{Burgulence},   {New trends in turbulence Turbulence: nouveaux aspects},
vol. 74 of the series Les Houches - Ecole d'Ete de Physique Theorique, 
Springer Berlin Heidelberg, (2002), pp. 341--383.
%
%

\bibitem{iturriaga} \textsc{R. Iturriaga, K. Khanin}.  \textit{Burgers turbulence and random Lagrangian
systems}, Comm. Math. Phys., vol. 232, issue 3, (2003), pp. 377--428.
%
%
\bibitem{lady} \textsc{O.A. Lady\v{z}enskaya;  V.A. Solonnikov; N.N. Ural'ceva},  \textit{Linear and quasi-linear equations of parabolic type}, 
Translations of Mathematical Monographs 23, Providence, RI: American Mathematical Society, pp. XI+648, 1968.
%
\bibitem{ma}
\textsc{J. Ma, P.~Protter, J.~Yong,}
\textit{Solving forward backward stochastic differential equations
  explicitly: a four step scheme,}
Probability Theory and Related Fields, 98 (1994), pp. 339--359.
%
\bibitem{meersch} \textsc{M. Meerschaert, W. Wang, Y. Xiao,} 
\textit{Fernique-type inequalities and moduli of continuity for anisotropic Gaussian random fields,}
Trans Am Math Soc., vol 365, (2012), pp. 1081--1107.
%
\bibitem{devilstaircase} \textsc{M. Vergassola, B. Dubrulle,  U. Frisch,  A. Noullez},
\textit{Burgers' equation, Devil's staircases and the mass distribution for large-scale structures}, Astro. Astrophys. 289, (1994), pp. 325--356.
%
%
\bibitem{zeldovich} 
\textsc{Ya. B.  Zel'dovich},  \textit{Gravitational instability: An approximate theory for large density perturbations}, Astro. Astrophys. 5, (1970), pp. 84--89.

\end{thebibliography}

\end{document}